\title{Tropical Geometry in Torus Bundles}
\author{Emily Dodwell}
\newtheorem{theorem}{Theorem}[subsection]
\theoremstyle{definition}
\newtheorem{definition}[theorem]{Definition}
\theoremstyle{lemma}
\newtheorem{lemma}[theorem]{Lemma}
\theoremstyle{definition}
\newtheorem{example}[theorem]{Example}
\theoremstyle{proposition}
\newtheorem{proposition}[theorem]{Proposition}
\theoremstyle{definition}
\newtheorem{construction}[theorem]{Construction}
\begin{document}
\maketitle

\begin{abstract}
We formulate and prove an analogue of the balancing condition for the tropicalization of a curve in a torus bundle $X$. We find that the usual balancing condition fails when the bundle is non-trivial and that the failure is captured by the first Chern classes of the line bundles associated to $X$. We discuss a geometric perspective of tropicalization where the weights arise naturally from intersection theory. The relations between divisors on a toric variety bundle put constraints on these weights which leads to the balancing condition.
\end{abstract}

\section{Introduction}

Tropical geometry is a process which turns subvarieties in algebraic tori into combinatorics. This allows complicated geometric or algebraic problems to be transformed into combinatorial problems, which introduces different techniques and perspectives. See Maclagan-Sturmfels~\cite{MS} for an introduction. Tropical geometry has already been used to establish new results in classical algebraic geometry, such as in the study of curves. For example, see 
\cite{CDPR, FJP, JR}.

We note two fundamental aspects of tropicalization: 
\begin{enumerate}
\item If $Z$ is a subvariety of $(\mathbb{C}^*)^n$, the tropicalization of $Z$, denoted $\text{Trop}(Z)$, is a polyhedral complex in $\mathbb{R}^n$.
\item There are naturally defined weights on the faces of $\text{Trop}(Z)$ which satisfy a natural constraint called the \emph{balancing condition}.
\end{enumerate}
The balancing condition states that for each codimension one face of the tropicalization, the weighted sum of the top-dimensional faces coming out of it is zero. Note that these two facts make tropicalization very concrete.

It is important to note that tropicalization depends on embedding a variety into an algebraic torus. However, this is limited in utility by the fact that many varieties do not admit non-constant maps to algebraic tori. It is therefore natural to consider how to construct tropicalization in a more general setting, preserving its relatively concrete nature. Namely, properties (1) and (2) listed above. Such generalisations have been considered, for example by Ulirsch~\cite{Ulirsch} and Gross~\cite{Gross}, using log geometry. We propose a related generalisation, which is perhaps geometrically more elementary but applies to a similar range of cases. 

Consider a torus bundle, given by the fiber product of a collection of line bundles on a fixed base $B$ minus their zero sections. We can think of this as a family of algebraic tori which varies continuously over $B$. Note that by setting $B$ to be a point, we recover the traditional setting. We will first construct tropicalization for a subvariety $Z$ of a torus bundle. It follows from work of Ulirsch~\cite[Theorem 1.1]{Ulirsch} that the tropicalization of $Z$ is always a polyhedral complex. We will then prove an analogue of the balancing condition for the tropicalization of $Z$ in the case where $Z$ is a curve. For an investigation of higher dimensional balancing, see Carocci-Monin-Nabijou~\cite{CMN}. Our principal findings are that:
\begin{enumerate}
\item The usual balancing condition fails when the bundle is non-trivial.
\item The failure of the balancing condition is exactly captured by the Chern classes of the line bundles associated to the torus bundle.
\end{enumerate}

Our generalisation is compatible with Ulirsch's in the following sense. Fix an SNC pair $(X,D)$ with divisor components $D_1,\ldots,D_k$ and assume all non-empty intersections of subsets of these divisors are connected. We can embed $X$ into the toric variety bundle $\mathbb{P}(\mathcal{O}(-D_1) \oplus \ldots \oplus \mathcal{O}(-D_k) \oplus \mathcal{O})$. Under this embedding, the pullback of the horizontal toric boundary is the divisor $D$ on $X$ and the tropicalization we describe here is exactly the tropicalization described by Ulirsch. We see that an SNC pair $(X,D)$ gives a torus bundle on $X$ where our theory applies. Conversely, a toric compactification of a torus bundle gives a log-regular scheme and Ulirsch's theory applies. In particular, this means that our balancing condition gives a balancing condition for tropicalizations of logarithmic curves in general.

In the context of logarithmic geometry, a balancing condition has been formulated by Gross and Siebert~\cite[Section 1.4]{GS}. The balancing condition seems to be more transparent via torus bundles and therefore gives an elementary approach to understand the balancing condition of Gross-Siebert.

\subsection{Summary of the argument}
We will see that when $Z$ is a curve, the tropicalization of $Z$ is an at most one-dimensional polyhedral complex in $\mathbb{R}^n$. Therefore, the balancing condition is a statement about the weighted sum of the edges around each vertex. We will see that these weights are naturally defined and that there are relations between divisors leading to the balancing condition.

\begin{enumerate}
\item We recall in Section~\ref{sec:geomtrop} a geometric perspective of tropicalization where the weights arise naturally from intersection theory.

\item We recall in Section~\ref{sec:balfromgoem} that relations between divisors on a toric variety put constraints on these weights and that this gives the usual balancing condition.

\item In Section~\ref{sec:tropbundles}, we discuss geometric tropicalization for subvarieties of torus bundles.

\item In Section~\ref{sec:guidingobs} we give examples which demonstrate that the relations between divisors on a toric variety bundle depend on the Chern classes of the line bundles.

\item In Section~\ref{sec:divisors} and \ref{sec:balancing}, we discuss the relations between divisors on a toric variety bundle and formulate an analogue of the balancing condition. The main theorem is stated in Theorem~\ref{thm:balancing}.
\end{enumerate}

\subsection{Future directions}

Some possible generalisations include formulating the balancing condition for subvarieties of a torus bundle of any dimension and considering subvarieties with coefficients defined over a non-Archimedean field e.g. the field of Puiseux series. Another possible direction is to formulate and prove enumerative correspondence theorems for bundles such as in Mikhalkin~\cite{Mikhalkin}.

\subsection{Acknowledgements}

This paper was originally written as an MPhil thesis at the University of Cambridge in 2024. I thank my supervisor, Dhruv Ranganathan, for his invaluable input and support. I would also like to thank Mark Gross and Navid Nabijou for their helpful comments. This work was supported by the Martingale Foundation.

\section{Tropicalization for Subvarieties of Algebraic Tori}
\label{sec:trop}
In this section, we will recall tropicalization for subvarieites of algebraic tori via the valuation map. We also recall geometric tropicalization and discuss the usual balancing condition.

\subsection{Tropicalization via valuation}
\label{sec:tropval}

For simplicity, we will first discuss tropicalization for a curve in $C$ in $(\mathbb{C}^*)^2$. In this case, the tropicalization of $C$ is a one-dimensional polyhedral complex in $\mathbb{R}^2$. The tropicalization of $C$ can be found by following the process outlined below. See Gathmann~\cite{Gathmann} for more details.

Let $K$ be the field of \emph{Puiseux series} whose elements are formal power series $a = \sum_{q \in \mathbb{Q}} a_q t^q$ such that the subset $\{q \in \mathbb{Q} : a_q \neq 0\}$ is bounded below and has a finite set of denominators. Define the \emph{valuation} of $a \in K \setminus 0$, denoted val$(a)$, to be the minimum of this subset. In other words, the valuation of $a$ is the exponent of the leading term.

Begin with a curve in $C \subseteq (\mathbb{C}^*)^2$ defined by an equation $$C = \lbrace (z_1,z_2) \in (\mathbb{C}^*)^2 : f(z_1,z_2) := \sum_{i,j \in \mathbb{Z}} a_{ij} z_1^i z_2^j =0 \rbrace $$ where $a_{ij} \in \mathbb{C}$ with finitely many non-zero. Consider the $K$-points of $C$, i.e. consider the curve $C \subseteq (K^*)^2$ given by $$C = \lbrace (z_1,z_2) \in (K^*)^2 : f(z_1,z_2) := \sum_{i,j \in \mathbb{Z}} a_{ij} z_1^i z_2^j =0 \rbrace.$$
We now define a map 
\begin{align*}
\text{Val}:(K^*)^n & \rightarrow \mathbb{R}^n \\
(z_1,\ldots, z_n)  & \mapsto (x_1,\ldots,x_n) := (-\text{val} (z_1),\ldots, -\text{val}(z_n)) 
\end{align*}
which allows us to find the tropicalization of $C$.

\begin{definition}
\label{def:tropvalC}
Let $C$ be a plane algebraic curve in $(K^*)^2$. The \emph{tropicalization} of $C$ is the closure of $\text{Val}(C)$ in $\mathbb{R}^2$.
\end{definition}

For curves in $(\mathbb{C}^*)^2$, the coefficients $a_{ij}$ of $f$ are in $\mathbb{C}$. In this case, the tropicalization of $C$ will be \emph{conical}. Specifically, it is a one-dimensional polyhedral complex with one vertex at the origin. An example of this can be seen in Figure~\ref{fig:tropC}. Note that the definition above also makes sense when the coefficients $a_{ij}$ are in $K$. In this case, the tropicalization can have multiple vertices which are connected by bounded edges. 

When $C$ is a curve in $(K^*)^2$ defined by an equation $$f(z_1,z_2) := \sum_{i,j \in \mathbb{Z}} a_{ij} z_1^i z_2^j =0,$$
we can also define tropicalization using the \emph{tropical semiring} $(\mathbb{R},\oplus,\odot)$. In this semiring, the operations are given by $$ x \oplus y = \max{\lbrace x,y \rbrace} \quad \text{and} \quad x \odot y = x + y .$$
Define the \emph{tropicalization} of the polynomial $f$ to be 
$$
g(x_1,x_2) = \bigoplus_{i,j} -\text{val}(a_{ij}) \odot {x_1}^{\odot i} \odot {x_2}^{\odot j},
$$
where $g$ can be thought of as a polynomial over the tropical semiring. As a polynomial over $\mathbb{R}$, we have
$$g(x_1,x_2) = \max \lbrace ix_1 + jx_2 -\text{val}(a_{ij}) : i,j \in \mathbb{Z}, a_{ij} \neq 0 \rbrace.
$$
By defining $x_i = -\text{val}(z_i)$, it can be seen that for any point $(z_1,z_2)$ on $C$, the maximum is attained at least twice. The set of all points $(x_1,x_2) \in \mathbb{R}^2$ where the maximum is attained at least twice gives the tropicalization of the curve $C$ defined by the equation $f$. This set of points is the corner locus of $g$, i.e. all points where $g$ is not differentiable.

\begin{definition}
\label{def:troppoly}
Let $C$ be a curve in $(K^*)^2$ defined by $f=0$ and let $g$ be the tropicalization of the polynomial $f$. Then, the tropicalization of $C$ is the corner locus of $g$.
\end{definition}

If $Z$ is defined by the vanishing of the polynomials $f_1,\ldots,f_n$ with corresponding tropical polynomials $g_1,\ldots,g_n$, then $$\text{Trop}(Z) \subseteq \bigcap_i \text{ corner locus of } g_i,$$ but the converse inclusion does not hold in general. However, Definition~\ref{def:tropvalC} may be generalised further for a subvariety $Z \subseteq (K^*)^n$ using the valuation map in the same way. 

\begin{definition}
\label{def:tropvalZ}
Let $Z$ be a subvariety of $(K^*)^n$. The \emph{tropicalization} of $Z$ is the closure of $\text{Val}(Z)$ in $\mathbb{R}^n$.
\end{definition}

We will focus on the case where $Z$ is a curve in $(\mathbb{C}^*)^n$. We will later see in Section~\ref{sec:balfromgoem} that the tropicalization of $Z$ comes with naturally defined positive integer weights on the edges. The \emph{balancing condition} states that around the vertex in the tropicalization of $Z$, the weighted sum of the primitive integral vectors along the edges is zero. This is demonstrated in Example~\ref{eg:trop}. For curves in $(K^*)^n$ whose tropicalizations may have multiple vertices, the balancing condition holds locally around each vertex.

More generally, when $Z$ is a subvariety of $(K^*)^n$, the balancing condition holds around each codimension one face. Consider a codimension one face $\tau$ of $\text{Trop}(Z)$ and the top-dimensional faces $\sigma_i$ such that $\tau$ is a face of $\sigma_i$. Now, choose a generator $v_i$ for each $\sigma_i$ such that $\sigma_i$ is generated by $\tau$ and $v_i$. By working in the quotient $\mathbb{R}^k / \langle \tau \rangle$, where $k = \dim(Z)$, we see that the image of $\tau$ is a point and the image of each $\sigma_i$ is a ray generated by the image of $v_i$. Note that this looks like the tropicalization of a curve in $(\mathbb{C}^*)^n$. In other words, around each codimension one face, we can reduce to the case for curves. Due to this and the fact that balancing is local around each codimension one face, we will focus on the balancing condition for a curve $Z$ in $(\mathbb{C}^*)^n$. Similarly, in Section~\ref{sec:bal}, we will focus on the case where $Z$ is a curve in a torus bundle.

Below is an example of tropicalization for a curve in $(\mathbb{C}^*)^2$. We find the tropicalization and show that the balancing condition holds.

\begin{example}
\label{eg:trop}
Consider the curve $C = \lbrace (z_1,z_2) \in (K^*)^2 : z_1^2 + z_2 = 1 \rbrace$. Then $\text{Val}(z_1,z_2) := (x_1,x_2) = (-\text{val}(z_1),-\text{val}(1 - z_1^2))$.
\begin{itemize}
\item If $\text{val}(z_1) > 0$, then $\text{val}(1 - z_1^2) = 0$.

\item If $\text{val}(z_1) < 0$, then $\text{val}(1 - z_1^2) = 2 \text{val}(z_1)$.

\item If $\text{val}(z_1) = 0$, then $\text{val}(1 - z_1^2) \geq 0$.
\end{itemize}
This gives the tropicalization of $C$, which is shown in Figure~\ref{fig:tropC}.

\begin{figure}[h]
\begin{tikzpicture}
\draw[dotted, thick, <->] (-2,0) -- (2,0) node[anchor=north west] {$x_1$};
\draw[dotted, thick, <->] (0,-2) -- (0,2) node[anchor=south east] {$x_2$};
\draw[black, very thick, ->] (0,0) -- (1,2);
\draw[black, very thick, ->] (0,0) -- (0,-2);
\draw[black, very thick, ->] (0,0) -- (-2,0);
\end{tikzpicture}
\caption{Tropicalization of $C = \lbrace (z_1,z_2): z_1^2 + z_2 = 1 \rbrace$.}
\label{fig:tropC}
\end{figure}
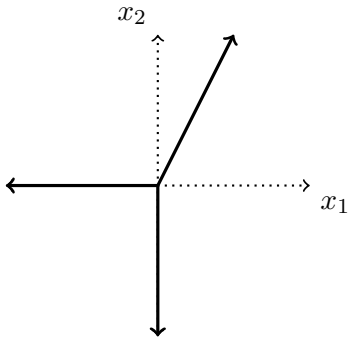

We can also find the tropicalization using the tropical polynomial given by $$g(x_1,x_2) = \max \lbrace 2x_1, x_2, 0 \rbrace.$$ We see that the maximum is attained at least twice when 
$2x_1 = x_2$ and $x_1,x_2 \geq 0$, or when
$x_1 = 0$ and $x_2 \leq 0$, or when
$x_2 = 0$ and $x_1 \leq 0$.
Note that this agrees with the above.

In the case where all coefficients $a_{ij}$ are in $\mathbb{C}$, the tropicalization is conical and we can define the following. 

\begin{itemize}
\item Let $\Delta$ be the convex hull of all points $(i,j)$ for $a_{ij} \neq 0$. Denote the vertices of $\Delta$ by $(p_k,q_k)$ for $k=1,\ldots,n$, labelled in a clockwise direction.
\item Let $u_k = (q_k - q_{k+1}, p_k - p_{k+1})$, where we set $(p_{n+1},q_{n+1}) := (p_1,q_1)$. 
\item Write $u_k = \omega_k \cdot v_k$ where $v_k$ is the primitive integral vector along $u_k$ and $\omega_k$ is a positive integer.
\end{itemize}
Then, the balancing condition states that $\sum \omega_k \cdot v_k = 0$. In other words, the weighted sum of the primitive integral vectors along the edges is $0$. 

For the curve $C$ defined above, we find that the edge pointing downwards in Figure~\ref{fig:tropC} has weight 2, and the other edges have weight 1. This gives the weighted sum
$$(-1,0)+(1,2)+2(0,-1) = (0,0).$$
We see that the tropicalization of $C$ satisfies the balancing condition.
\end{example}

\subsection{Geometric tropicalization}
\label{sec:geomtrop}

We will now discuss an alternative definition of tropicalization due to Tevelev~\cite{Tevelev}. In the case of a curve, this definition will allow us to discuss how the weights are naturally assigned to the rays and how certain relations lead to the balancing condition.

Consider a toric variety $X_\Sigma$ of dimension $n$ and a $k$-dimensional cone $\sigma$ in $\Sigma$. This cone corresponds to an $(n-k)$-dimensional toric variety which we call a \emph{(closed) stratum} of $X_\Sigma$. This toric variety contains an $(n-k)$-dimensional torus which we refer to as a \emph{locally-closed stratum} of $X_\Sigma$.

We see that there is an order-reversing correspondence between cones $\sigma$ in the fan $\Sigma$ and closed strata in $X_\Sigma$. In particular, the rays $\rho$ of the fan $\Sigma$ correspond to divisors $E_\rho$ of $X_\Sigma$ which are irreducible subvarieties of codimension one that are mapped to themselves by the torus. The union of these divisors gives the \emph{toric boundary} $X_\Sigma \setminus (\mathbb{C}^*)^n = \bigcup E_\rho$.

Consider a subvariety $Z$ of $(\mathbb{C}^*)^n$ and its closure $\overline{Z}$ in a toric variety $X_\Sigma$. We say that $\overline{Z}$ meets a stratum of codimension $k$ \emph{(dimensionally) transversely} if 
$$ \dim \left( \overline{Z} \cap \text{stratum of codimension } k \right) = \dim(Z)-k,$$
or if the intersection is empty. In the literature, this is sometimes called a `proper intersection'. The following theorem is due to Tevelev~\cite{Tevelev}.

\begin{theorem}
\label{thm:ztransverse}
Let $Z$ be a subvariety of an algebraic torus $(\mathbb{C}^*)^n$. Then, there exists a toric variety $X_\Sigma$ compactifying $(\mathbb{C}^*)^n$ such that the closure of $Z$ meets the toric boundary transversely.
\end{theorem}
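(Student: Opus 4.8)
The plan is to reduce the statement to a combinatorial choice of fan dictated by the tropicalization of $Z$, and then to verify transversality orbit by orbit. First I would invoke the structure theorem for tropical varieties: since $Z$ has constant coefficients in $\mathbb{C}$, the set $\text{Trop}(Z)$ carries the structure of a rational polyhedral fan in $\mathbb{R}^n$, pure of dimension $d = \dim(Z)$. Because every rational polyhedral fan can be refined and completed, I would then choose a \emph{complete} rational polyhedral fan $\Sigma$ in $\mathbb{R}^n$ for which $\text{Trop}(Z)$ is a union of cones of $\Sigma$. Completeness of $\Sigma$ guarantees that $X_\Sigma$ is a compactification of $(\mathbb{C}^*)^n$ and that $\overline{Z}$ is proper.

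The key geometric input is the orbit-intersection criterion relating $\overline{Z}$ to $\text{Trop}(Z)$: for a cone $\sigma \in \Sigma$ with torus orbit $O_\sigma$ of dimension $n - \dim(\sigma)$, one has $\overline{Z} \cap O_\sigma \neq \emptyset$ if and only if the relative interior $\text{relint}(\sigma)$ meets $\text{Trop}(Z)$. I would establish this via initial degenerations: a point $w \in \text{relint}(\sigma)$ records a one-parameter degeneration of $Z$ whose limit lands in $O_\sigma$ precisely when $w \in \text{Trop}(Z)$. Granting the criterion, the choice of $\Sigma$ pays off. If $\text{relint}(\sigma)$ meets $\text{Trop}(Z)$, then because relative interiors of the cones of $\Sigma$ partition its support and $\text{Trop}(Z)$ is a union of cones of $\Sigma$, the whole cone $\sigma$ must lie in $\text{Trop}(Z)$; in particular $\dim(\sigma) \leq d$. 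Thus $\overline{Z}$ meets only those boundary orbits whose cones sit inside the tropicalization.

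It remains to pin down the dimension of each nonempty intersection, which is exactly the transversality condition. For $\sigma$ with $\dim(\sigma) = k \leq d$, I would show $\dim(\overline{Z} \cap O_\sigma) = d - k$. Here I would use that the initial degeneration $\text{in}_w(Z)$, for $w \in \text{relint}(\sigma)$, is a $d$-dimensional subvariety of $(\mathbb{C}^*)^n$ invariant under the subtorus $T_\sigma$ associated to $\sigma$; passing to the quotient $T/T_\sigma \cong O_\sigma$ identifies $\overline{Z} \cap O_\sigma$ with the image of $\text{in}_w(Z)$, whose dimension drops by exactly $k = \dim(T_\sigma)$, yielding $d - k$.

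I expect the dimension count of the last paragraph to be the main obstacle. The fan construction and the orbit-intersection criterion are essentially formal once the structure theorem is in hand, but upgrading ``nonempty when the cone lies in $\text{Trop}(Z)$'' to ``of the expected codimension'' requires the full relationship between initial degenerations, the $T_\sigma$-invariance of the limit, and the equidimensionality of the tropical variety. An alternative, and perhaps cleaner, route would follow Tevelev directly by studying the multiplication map $T \times \overline{Z} \to X_\Sigma$ and arranging $\Sigma$ so that this map is faithfully flat; flatness then forces the fibre dimensions, and hence the orbit intersections, to be correct. Either way, the crux is controlling the dimension of $\overline{Z} \cap O_\sigma$.
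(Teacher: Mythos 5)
You should know at the outset that the paper does not prove this theorem at all: it states it as due to Tevelev and simply cites \cite{Tevelev}. So your proposal has to be judged against the standard proof in the literature rather than against anything internal to the paper. Your outline is that standard argument --- structure theorem for constant-coefficient tropicalizations, a complete fan compatible with $\text{Trop}(Z)$, the orbit--tropicalization criterion, and a dimension count on $\overline{Z} \cap O_\sigma$ --- and the ``alternative route'' you sketch at the end (arranging the multiplication map $T \times \overline{Z} \to X_\Sigma$ to be faithfully flat after refining $\Sigma$, via flattening) is in fact exactly what the cited source does, with flatness then forcing the expected dimensions.

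There is, however, one genuine gap, and it sits precisely in the step you identified as the crux. Your dimension count assumes that for $w \in \text{relint}(\sigma)$ the initial degeneration $\text{in}_w(Z)$ is invariant under the subtorus $T_\sigma$, but this does not follow from your choice of $\Sigma$. Requiring only that $\text{Trop}(Z)$ be a union of cones of $\Sigma$ does not prevent a cone $\sigma \subseteq \text{Trop}(Z)$ from straddling walls of the Gr\"obner fan of $Z$. In that case $\text{in}_w(Z)$ genuinely varies as $w$ moves through $\text{relint}(\sigma)$; each individual $\text{in}_w(Z)$ is a priori invariant only under the torus of the Gr\"obner cone containing $w$ in its relative interior (possibly just a one-parameter subgroup), and the identification of $\overline{Z} \cap O_\sigma$ with a single quotient $\text{in}_w(Z)/T_\sigma$ breaks down: the intersection is then a union of images of several initial degenerations, and the contributions from Gr\"obner walls need not have dimension $d - \dim(\sigma)$, so transversality is not established. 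The repair is cheap but must be said: take the fan structure on $\text{Trop}(Z)$ induced by the Gr\"obner fan (or any refinement of it), and then complete that fan. With this choice $\text{in}_w(Z)$ is constant on $\text{relint}(\sigma)$, hence $T_\sigma$-invariant; one then proves $\overline{Z} \cap O_\sigma$ equals the image of $\text{in}_w(Z)$ under $T \to T/T_\sigma \cong O_\sigma$, and since $T_\sigma$ acts freely on $T$ the fibres are $\dim(\sigma)$-dimensional, giving $\dim(\overline{Z} \cap O_\sigma) = d - \dim(\sigma)$. With that correction, together with proofs of the two facts you rightly flag as the substantive inputs (the orbit criterion and the equidimensionality of initial degenerations), your argument closes.
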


\begin{example}
Consider the toric variety $\mathbb{P}^2$ with homogeneous coordinates $(x_0:x_1:x_2)$. Let $\overline{Z}$ be the curve in $\mathbb{P}^2$ defined by $x_0 = x_1$. This curve meets the point where $x_0 = x_1 = 0$. This intersection is not transverse.

The above theorem states that it is possible to find a toric variety such that the closure of $Z$ meets the toric boundary transversely. In this case, if we instead choose the blow-up of $\mathbb{P}^2$ at the point $x_0 = x_1 = 0$, we see that $\overline{Z}$ meets the toric boundary transversely. This is shown in Figure~\ref{fig:transverse}.

\begin{figure}[h]
\includegraphics[width=250pt]{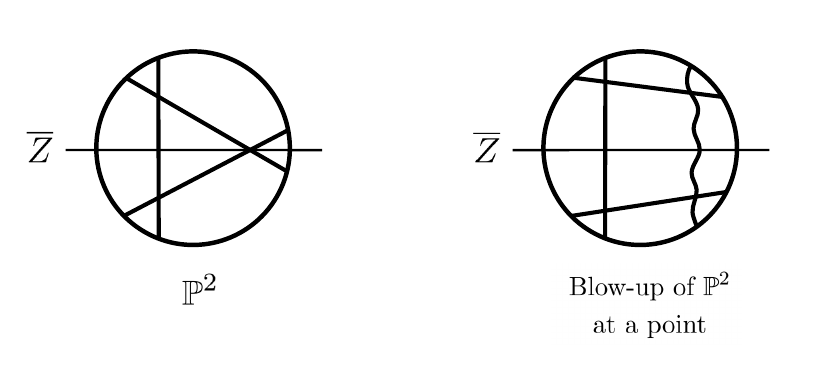}
\caption{Visualisation of $\overline{Z}$ meeting the toric boundary non-transversely and transversely.}
\label{fig:transverse}
\end{figure}

\end{example}

We can now define geometric tropicalization . It follows from Tevelev~\cite{Tevelev} that this definition of tropicalization agrees with Definition~\ref{def:tropvalZ} for a subvariety $Z \subseteq (\mathbb{C}^*)^n$.

\begin{definition}
Let $Z$ be a subvariety of $(\mathbb{C^*})^n$. Let $X_\Sigma$ be a toric variety such that the closure of $Z$ meets the toric boundary transversely. Then, the \emph{tropicalization} of $Z$ is the union of cones $\sigma$ in the fan $\Sigma$ such that the closure of $Z$ meets the strata dual to $\sigma$ in a non-empty subset.
\end{definition}

Consider the case where $Z$ is a curve in $(\mathbb{C^*})^n$. By Theorem~\ref{thm:ztransverse}, we can find a toric variety $X_\Sigma$ such that $\overline{Z}$ meets the toric boundary transversely. We note that the tropicalization is independent of the choice of $X_\Sigma$ and that we can always choose $\Sigma$ to be smooth. We see by counting dimensions that $Z$ will only meet locally-closed strata of codimension $\leq 1$. These correspond to the rays and points of the fan $\Sigma$. 

The definition above then tells us that to find $\text{Trop}(Z)$ for $Z$ a curve, we ask whether $Z$ meets $E_\rho$ for each ray $\rho$. If it does, since the intersection is transverse, we see that $Z$ and $E_\rho$ must meet in a number of points counted with multiplicity. 
\begin{definition}
In the set up above, the \emph{weight} $\omega_\rho$ along a ray $\rho$ in $\text{Trop}(Z)$ is the intersection multiplicity of $Z \cap E_\rho$.
\end{definition}
\noindent
Include this ray in $\text{Trop}(Z)$ and annotate it with the weight. This gives the tropicalization of $Z$.

\subsubsection{Balancing from geometry}
\label{sec:balfromgoem}

We will now discuss the balancing condition for the tropicalization of a curve $Z \subseteq (\mathbb{C}^*)^n$. We have seen that the weights come from intersections with the divisors $E_\rho$. Let $X_\Sigma$ be a smooth toric variety and let $E_1,\ldots,E_k$ be the divisors corresponding to the rays of the fan $\Sigma$. Let $v_1,\ldots, v_k$ be the primitive integral vectors along the rays. Then
$$H^2(X_\Sigma) = \bigoplus_{i=1}^k \mathbb{Z} [E_i] / I $$
where $I$ is the subgroup generated by all expressions $\sum_\rho \langle e_i, v_\rho \rangle \cdot E_\rho$
for $e_i$ in a basis of the cocharacter lattice $M$.

We see that while the divisors $E_\rho$ generate the cohomology of $X_\Sigma$, there are relations. By defining the weight $\omega_\rho := E_\rho \cdot Z$, the relation $\sum \langle e_i, v_\rho \rangle \cdot E_\rho = 0$ gives 
$$\sum \langle e_i, v_\rho \rangle \cdot \omega_\rho = \sum \langle e_i, v_\rho \rangle \cdot E_\rho \cdot Z = 0.$$
For the standard basis $e_i$, this gives a weighted sum on each coordinate which we can instead express as a weighted sum of vectors. This gives $\sum v_\rho \cdot \omega_\rho = 0$, i.e. the weighted sum of the primitive integral vectors along the edges is zero. Note that this is the balancing condition.

\section{Tropicalization for Torus Bundles}
\label{sec:tropbundles}

Let $B$ be a smooth, connected, projective variety. We will now consider a generalisation of the above setting where $Z$ is a subvariety of a torus bundle $X$ over $B$.

\begin{construction}
\label{def:torusbundle}
A \emph{torus bundle} $X$ over $B$ is constructed as follows:
\begin{itemize}

\item Begin with a collection of line bundles $L_1,\ldots,L_n$ on $B$\footnote{Here we view a line bundle as a geometric object rather than a sheaf.}.

\item For each line bundle $L_i$, remove the zero section and denote this $L_i^*$. 

\item Let $X$ be the fibered product $L_1^* \times_B \cdots \times_B L_n^*$.

\end{itemize}
\end{construction}

Note that for a trivialising open $U \subseteq B$, each line bundle $L_i$ on $B$ is locally $U \times \mathbb{C}$ and each $L_i^*$ is locally $U \times \mathbb{C}^*$. Therefore, $X$ is locally $U \times (\mathbb{C}^*)^n$ and $X$ is a bundle over $B$ with fiber $(\mathbb{C}^*)^n$.

\begin{example}
\label{eg:torusbundle}
Consider two line bundles $L_1$ and $L_2$ on $B = \mathbb{P}^1$ and construct a torus bundle $X$ as in Construction~\ref{def:torusbundle}. This can be pictured as shown in Figure~\ref{fig:egbundles1} where above each point in $B$, there is a $(\mathbb{C}^*)^2$ represented by the plane minus the two lines.
\begin{figure}[h]
\includegraphics[width=230pt]{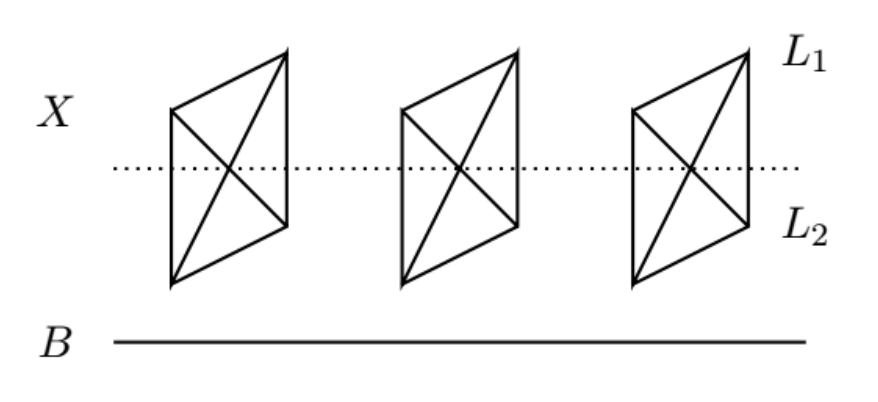}
\caption{A visualisation of a torus bundle.}
\label{fig:egbundles1}
\end{figure}
\end{example}

\begin{construction}
A \emph{toric variety bundle} $Y$ over $B$ is constructed as follows:
\begin{itemize}

\item Begin with a torus bundle $X$ as in Definition~\ref{def:torusbundle} and a fan $\Sigma$ in $\mathbb{R}^n$.

\item For a trivialising open $U \subseteq B$, compactify $X \vert_U = U \times (\mathbb{C}^*)^n$ in $U \times X_\Sigma$, where $X_\Sigma$ is the toric variety associated to the fan $\Sigma$.

\item Glue to get a toric variety bundle $Y$ over $B$.
\end{itemize}
\end{construction}

We see that the toric variety bundle $Y$ locally looks like $U \times X_\Sigma$. Topologically, $Y$ is a fiber bundle over $B$ with fiber $X_\Sigma$:
\begin{align*}
X_\Sigma \;\; \hookrightarrow  \;\; & Y \\
& \downarrow \\
& B
\end{align*}

\begin{example}

Begin with the torus bundle $X$ in Example~\ref{eg:torusbundle} and consider choosing $\Sigma$ to be the fan of $\mathbb{P}^2$. This gives a toric variety bundle $Y$ which can be pictured as shown in Figure~\ref{fig:egbundles2}.

\begin{figure}[h]
\includegraphics[width=230pt]{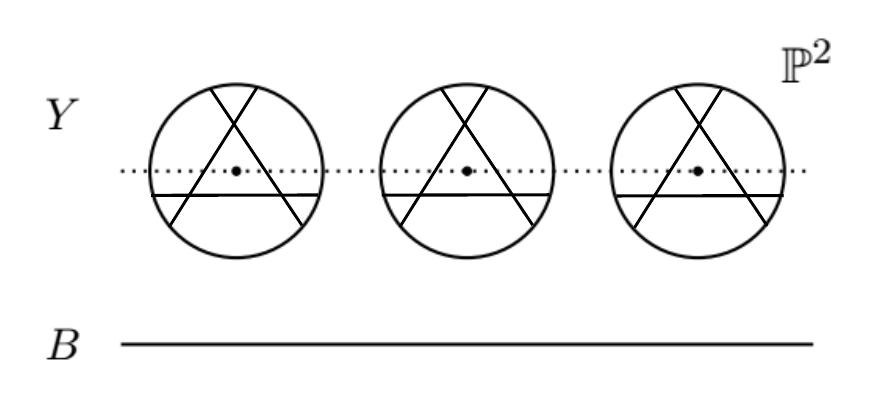}
\caption{A visualisation of a toric variety bundle.}
\label{fig:egbundles2}
\end{figure}

\end{example}

\subsection{Geometric tropicalization for torus bundles}

We will now construct tropicalization for a subvariety $Z$ of a torus bundle $X$. Consider a toric variety bundle $Y$ compactifying $X$ with toric variety $X_\Sigma$ in each fiber. For each ray $\rho$ of $\Sigma$, the divisor $E_\rho$ of $X_\Sigma$ gives a \emph{horizontal divisor} $D_\rho$ of the toric variety bundle $Y$ by taking the union of $E_\rho$ over $B$. In $Y$, we have the \emph{toric boundary} $Y \setminus X = \bigcup D_\rho$. The following theorem follows from Ulirsch~\cite[Theorem 1.2(ii)]{Ulirsch}, which we state here without proof.

\begin{theorem}
\label{thm:ztransversebundle}
Let $X$ be a torus bundle over $B$ and let $Z$ be a subvariety of $X$. Then, there exists a toric variety bundle $Y$ compactifying $X$ such that the closure of $Z$ meets the toric boundary transversely.
\end{theorem}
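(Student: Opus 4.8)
The plan is to reduce to Tevelev's theorem (Theorem~\ref{thm:ztransverse}) after first observing that a toric variety bundle $Y$ compactifying $X$ is the same data as a single fan $\Sigma \subseteq \mathbb{R}^n$. Over an overlap of trivializing opens the torus bundle $X = L_1^* \times_B \cdots \times_B L_n^*$ is glued by maps $z_i \mapsto g_{i}\,z_i$ with $g_i$ a nowhere-vanishing function on the base; these act on the fiber $(\mathbb{C}^*)^n$ through the fiberwise torus action, which preserves the cocharacter lattice $\mathbb{Z}^n$ and so extends to any chosen (torus-equivariant) compactification $X_\Sigma$ of the fiber. Hence one fixed $\Sigma$ glues $U_\alpha \times X_\Sigma$ into a bundle $Y$ over all of $B$, and its horizontal strata are indexed by the cones $\sigma \in \Sigma$: the stratum $W_\sigma$ dual to $\sigma$ is a subbundle over $B$ with fiber the orbit $O_\sigma \subseteq X_\Sigma$, so $\dim W_\sigma = \dim B + (n - \dim\sigma)$ and $W_\sigma$ has codimension $\dim\sigma$ in $Y$. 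Transversality along $W_\sigma$ is then exactly the statement $\dim(\overline{Z}\cap W_\sigma) = \dim Z - \dim\sigma$, or emptiness of the intersection.

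First I would produce a candidate fan by passing to the generic fiber. Let $\eta$ be the generic point of $B$ and $F = \overline{\mathbb{C}(B)}$. Since the bundle is Zariski-locally trivial, the fiber $X_\eta = X \times_B \eta$ is a split torus $(\mathbb{G}_m)^n$ over $\mathbb{C}(B)$, and $Z_\eta := Z \times_B \eta$ is a subvariety of it. Applying Theorem~\ref{thm:ztransverse} over $F$ to $Z_\eta$ yields a smooth fan $\Sigma$ in the fiber cocharacter space $\mathbb{R}^n$ such that the closure of $Z_\eta$ in $X_\Sigma$ meets the toric boundary transversely. As $\mathbb{R}^n$ is intrinsic to the bundle and independent of the trivialization, this $\Sigma$ determines a toric variety bundle $Y$ over $B$.

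Next I would spread out the transversality from $\eta$. The horizontal strata $W_\sigma$ are pulled back from the fiber fan, so for each cone $\sigma$ the locus of $b \in B$ where the fiber intersection $\overline{Z}_b \cap O_\sigma$ has excess dimension is closed, by upper semicontinuity of fiber dimension; since this locus avoids $\eta$ it is a proper closed subset. Taking the union over the finitely many cones, there is a dense open $B^\circ \subseteq B$ over which $\overline{Z}|_{B^\circ}$ meets every $W_\sigma$ transversely. The dimension count $\dim B + (\dim Z - \dim B - \dim\sigma) = \dim Z - \dim\sigma$ confirms that generic transversality in the fibers gives the desired transversality in $Y$ over $B^\circ$.

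The main obstacle is to upgrade this to transversality over \emph{all} of $B$, ruling out excess intersection over the special fibers. The difficulty is genuine: a fan aligned with the tropicalization of $Z_\eta$ need not be aligned with the tropicalization of a special fiber $Z_b$, and because all of these polyhedral sets live in the \emph{same} $\mathbb{R}^n$ an excess component can develop over $B \setminus B^\circ$. I would resolve this as in Tevelev's argument, but applied to a common refinement: by Noetherian induction the fibers $Z_b$ realize only finitely many tropical types as $b$ varies, each a polyhedral set in $\mathbb{R}^n$, and I would take $\Sigma$ to be a smooth fan refining the common refinement of all of them, so that every horizontal stratum meets $\overline{Z}$ properly fiber by fiber and hence properly in $Y$. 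Equivalently, this simultaneous resolution is exactly the content of Ulirsch's toroidal resolution~\cite[Theorem 1.2(ii)]{Ulirsch} in the language of log-regular schemes, and the argument above is its translation into the bundle picture. I expect the delicate point to be precisely the verification that a single global refinement of $\Sigma$ can handle all special fibers at once, without having to modify the base $B$.
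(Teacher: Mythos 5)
There is a genuine gap, and it sits exactly at the step you flag as delicate: your proposed fix (a fan refining the finitely many fiberwise tropical types) does not repair the problem, because the fan needed for transversality is not determined by the fibers of $Z$ at all. The rays of $\text{Trop}(Z)$ record degenerations of $Z$ along one-parameter families in the \emph{total space} --- $K$-points of $Z$ whose fiber coordinates acquire nontrivial valuation while the base point moves as well --- and these are invisible both in the generic fiber (with its trivial valuation) and in every closed fiber $Z_b$. Concretely, take $B=\mathbb{P}^1$, $L_1=L_2=\mathcal{O}$, so $X=\mathbb{P}^1\times(\mathbb{C}^*)^2$, and let $Z=\lbrace ([x:1],x,x) : x\in\mathbb{C}^*\rbrace$. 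Every nonempty closed fiber $Z_b$ is a single point with tropicalization $\lbrace (0,0)\rbrace$, and $Z_\eta$ is a single $F$-point whose tropicalization over the trivially valued field $F$ is also $\lbrace (0,0)\rbrace$; so both your generic-fiber step and your common-refinement step impose no constraint whatsoever and permit, say, the fan of $\mathbb{P}^2$. But the closure of $Z$ in $Y=\mathbb{P}^1\times\mathbb{P}^2$ contains the point $([0:1],[1:0:0])$, a fiberwise torus-fixed point lying on a codimension-two horizontal stratum, so dimensional transversality fails. The true tropicalization here is the whole diagonal line $\lbrace(t,t)\rbrace\subseteq\mathbb{R}^2$, so any admissible fan must contain the rays $\pm(1,1)$ --- rays which no fiberwise tropicalization detects. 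This is the same phenomenon as the paper's Example~\ref{eg:p1bundles}(b),(c): curves whose fibers are points but whose tropicalizations are one-dimensional.

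For the same reason, your closing identification is not correct: Ulirsch's Theorem 1.2(ii) is not a simultaneous fiberwise resolution; it concerns the tropicalization of $Z$ computed from valuations on the total space (equivalently, from the log/toroidal structure of the compactified bundle), which is precisely the object your construction never sees. Note also that the paper itself gives no independent proof --- it states the theorem without proof, quoting Ulirsch~\cite{Ulirsch}, whose argument locally trivializes the bundle over the base and reduces to the toric case while tracking the total-space tropicalization; the paper explicitly remarks that a direct Tevelev-style proof in the bundle setting is an open direction. A corrected version of your argument would have to start from $\text{Trop}(Z)\subseteq\mathbb{R}^n$ as defined via the valuation map on $X(K)$, choose $\Sigma$ supported on it, and then prove transversality by a Gr\"obner/Tevelev argument adapted to the bundle --- the fiberwise reduction cannot be the mechanism.
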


Ulirsch's proof uses the fact that the bundle can be locally trivialised and reduces to the toric case. It would be interesting to also find a direct proof in the bundle case, by using Gr\"obner theory for bundles as in Tevelev~\cite{Tevelev}.

\begin{definition}
\label{def:geomtropbundle}
Let $X$, $Y$ and $Z$ be as in Theorem~\ref{thm:ztransversebundle}. The \emph{tropicalization} of $Z$ is the union of cones $\sigma$ in the fan of the toric variety bundle $Y$ such that the closure of $Z$ meets the strata dual to $\sigma$ in a non-empty subset.
\end{definition}

In the case where $Z$ is a curve, we see by counting dimensions that $Z$ will only meet locally-closed strata of codimension $\leq 1$ corresponding to the rays and points of the fan $\Sigma$. Therefore, the tropicalization of $Z$ will be a subset of the rays of $\Sigma$ which is at most a one-dimensional polyhedral complex in $\mathbb{R}^n$. Note that in the torus bundle case, the tropicalization of $Z$ may be zero dimensional. For example, consider $B = \mathbb{P}^1$, $L=\mathcal{O}_{\mathbb{P}^1}$ and $Z \subseteq X$ a constant section. This is shown in Figure~\ref{fig:p1bundles}(a).

The definition above then tells us that to find $\text{Trop}(Z)$ for $Z$ a curve, we ask whether $Z$ meets $D_\rho$ for each ray $\rho$. If it does, by Theorem~\ref{thm:ztransversebundle}, we see that $Z$ and $D_\rho$ must meet in a number of points, counted with multiplicity.

\begin{definition}
In the set up above, the \emph{weight} $\omega_\rho$ along a ray $\rho$ in $\text{Trop}(Z)$ is the intersection multiplicity of $Z \cap D_\rho$.
\end{definition}

\noindent Include this ray in $\text{Trop}(Z)$ and annotate it with the weight $\omega_\rho$. This gives the tropicalization of $Z$.

\begin{example}
\label{eg:p1bundles}
Let $Y$ be a $\mathbb{P}^1$-bundle over $\mathbb{P}^1$. There are two horizontal divisors denoted $D_1$ and $D_2$. We will consider tropicalizations of a curve $Z$ in $Y$ for different choices of $Z$. 

If $Z$ misses $D_1$ and $D_2$ as in Figure~\ref{fig:p1bundles}(a), then the tropicalization is just a point. Otherwise, $Z$ could meet the divisors in various other ways leading to different tropicalizations as shown in Figure~\ref{fig:p1bundles}(b) and (c).

\begin{figure}[h]
\includegraphics[width=350pt]{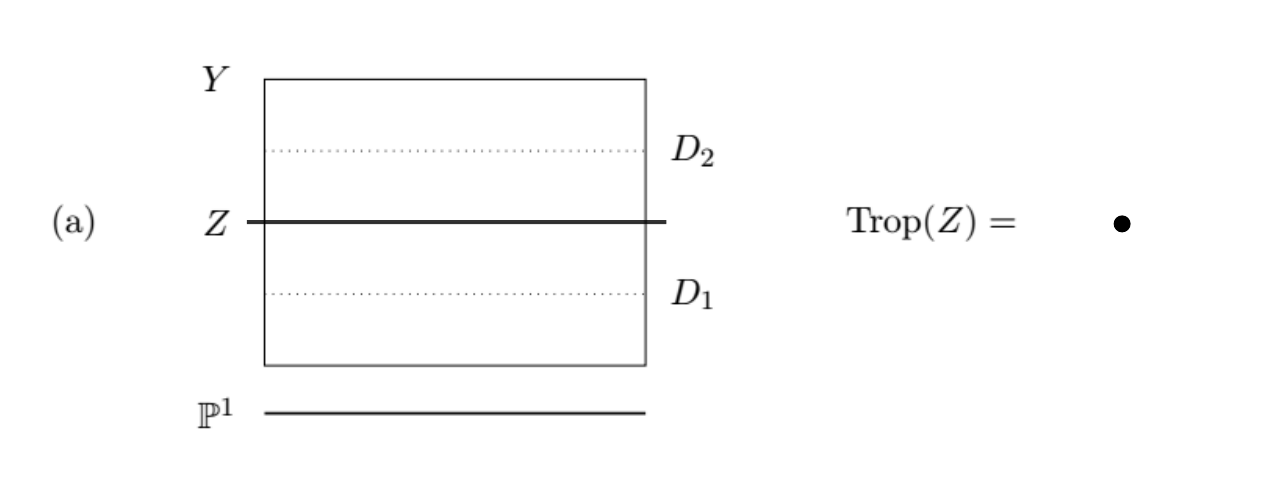}
\end{figure}
\begin{figure}[h]
\includegraphics[width=350pt]{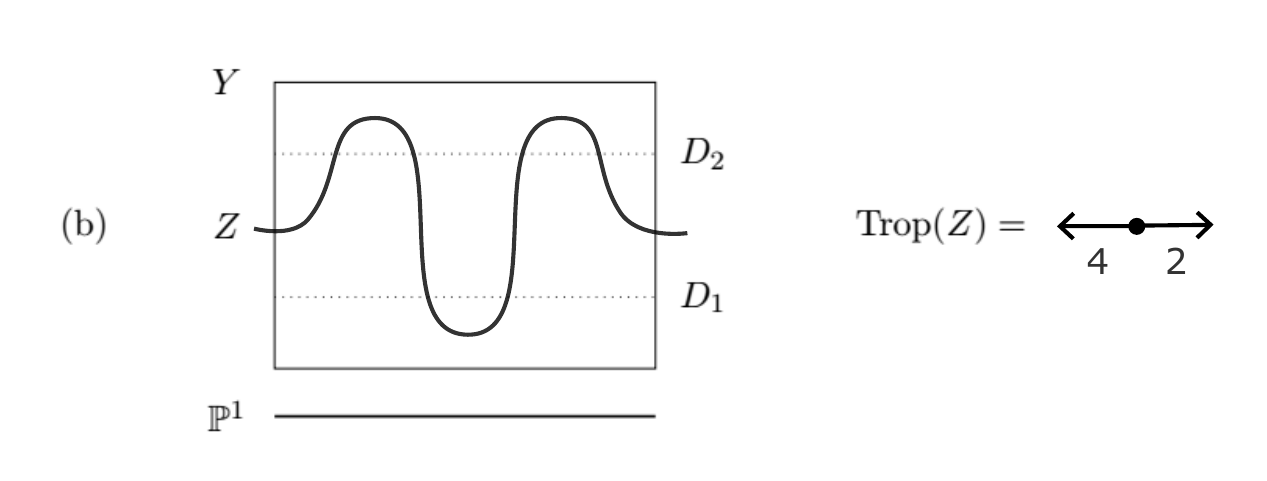}
\end{figure}
\begin{figure}[h]
\includegraphics[width=350pt]{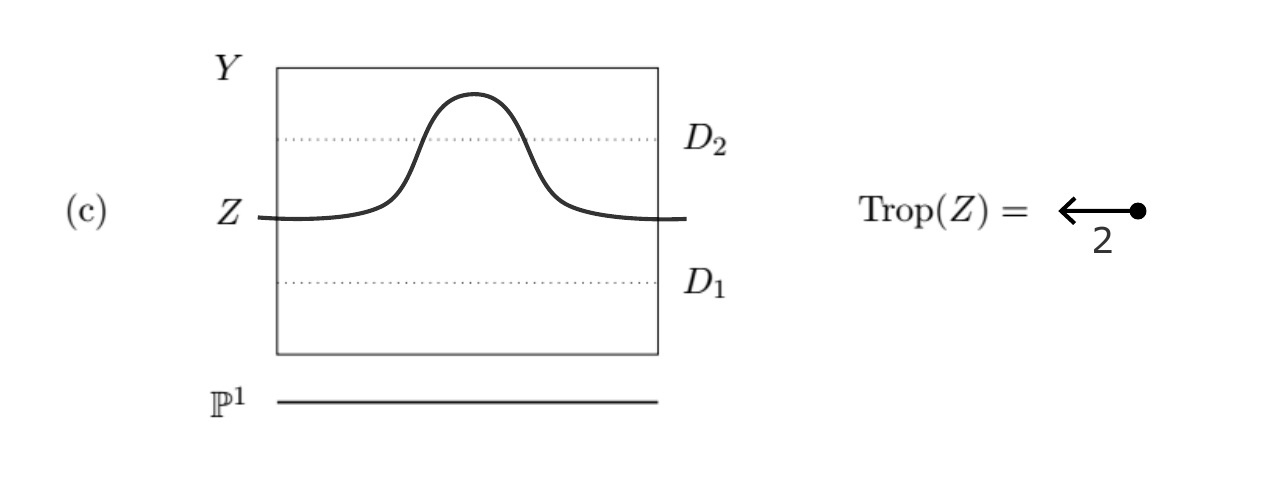}
\caption{Examples of curves in a $\mathbb{P}^1$-bundle over $\mathbb{P}^1$.}
\label{fig:p1bundles}
\end{figure}

\end{example}

\subsection{Tropicalization for torus bundles via valuation}

By setting $B$ to be a point, $Z$ is a subvariety of $(\mathbb{C}^*)^n$ and we recover the original setting. We can also define tropicalization for subvarieties of a torus bundle coordinate-wise using valuation as in Section~\ref{sec:tropval}.

Recall that there are line bundles $L_1, \ldots, L_n$ on $B$. We have an open cover $\lbrace U_\alpha \rbrace$ of $B$ and fiber-preserving isomorphisms 
$$\phi_{i,\alpha}: L_i \vert_{U_\alpha}  \xrightarrow{\sim} U_\alpha \times \mathbb{A}^1.$$ 
These give transition functions 
$$g_{i,\alpha\beta}:= \phi_{i,\alpha} {\phi_{i,\beta}}^{-1} : U_\alpha \cap U_\beta \times \mathbb{A}^1 \rightarrow U_\alpha \cap U_\beta \times \mathbb{A}^1$$
which gives rise to maps 
$$h_{i,\alpha\beta}: U_\alpha \cap U_\beta \rightarrow \mathbb{G}_m,$$
i.e. $g_{i,\alpha\beta}(b,z) = (b, h_{i,\alpha\beta}(b)z)$. These maps are defined over $\mathbb{C}$, so for each $b \in U_\alpha \cap U_\beta$ we have $h_{i,\alpha\beta}(b) \in \mathbb{C}^*$.

We would like to describe a $K$-point $p \in Z(K) \hookrightarrow X(K)$, where $K$ is the field of Puiseux series. In other words, we would like to write $p$ as $(b, z_1, \ldots, z_n)$ for $b \in B, z_i \in L_i(K)$. However, this is only possible on each open set $U_\alpha$. For $p \in U_\alpha \times (K^*)^n$, let $b$ and $z_i$ be such that $\phi_{i,\alpha}(p) = (b, z_i)$. Then define
\begin{align*}
\text{Val}: X(K) & \rightarrow \mathbb{R}^n \\
p & \mapsto (-\text{val}(z_1),\ldots,-\text{val}(z_n)).
\end{align*}
If $p \in U_\alpha \cap U_\beta$, then $p$ may be written in more than one way, say $(b, z_1, \ldots, z_n)$ and $(b, y_1, \ldots, y_n)$. However, the transition functions imply that we have $y_i = h_{i,\alpha\beta}(b)z_i$, where $h_{i,\alpha\beta}(b) \in \mathbb{C}^*$. Therefore, $\text{val}(y_i)=\text{val}(z_i)$ and the map is well-defined.

\begin{definition}
Let $Z$ be a subvariety of a torus bundle $X$. Then, the \emph{tropicalization} of $Z$ is the closure of $\text{Val}(Z)$ in $\mathbb{R}^n$.
\end{definition}

It follows from Ulirsch~\cite{Ulirsch} that this definition of tropicalization agrees with Definition~\ref{def:geomtropbundle}.

\section{The Balancing Condition}
\label{sec:bal}

In this section we will discuss the balancing condition for the tropicalization of a curve $Z$ in a torus bundle $X$. We will see that, as in Section~\ref{sec:balfromgoem}, the balancing condition comes from the divisor relations on a toric variety. We assume that the fan $\Sigma$ of the toric variety bundle $Y$ is smooth and complete. As in the torus case, the tropicalization of $Z$ is independent of the choice of $\Sigma$ and we can always choose $\Sigma$ to be smooth.

\subsection{Guiding observations}
\label{sec:guidingobs}

Consider a toric variety $X_\Sigma$ with the cocharacter lattice $M$. Let $E_\rho$ be the divisor corresponding to the minimal lattice point $v_\rho$ for $\rho \in \Sigma^{(1)}$. If $u$ is any element of $M$, the divisor of the rational function $\chi^u$ on $X_\Sigma$ is $$\text{div}(\chi^u) = \sum \langle u,v_\rho \rangle \cdot E_\rho$$ and this vanishes in $H^2(X_\Sigma)$ since it is the divisor of a rational function. For example, if $X_\Sigma = \mathbb{P}^1$, there are two rays in the fan corresponding to two divisors $E_1$ and $E_2$. Then for $u=1 \in \mathbb{Z} = M$ we have $E_1 - E_2 = 0$ in $H^2(\mathbb{P}^1)$.

These relations also work perfectly well for \emph{trivial} bundles. Consider the trivial bundle $Y = B \times \mathbb{P}^1$. By the K{\"u}nneth theorem, we have
$$ H^2(Y) = H^2(B) \otimes H^0(\mathbb{P}^1) \oplus H^0(B) \otimes H^2(\mathbb{P}^1).$$
We see that the horizontal divisors come from $H^0(B) \otimes H^2(\mathbb{P}^1)$. In this case, the horizontal divisors of $Y$ are given by $D_1 = E_1 \times B$ and $D_2 = E_2 \times B$. Since $E_1 - E_2 = 0$ in $H^2(\mathbb{P}^1)$, we have $$D_1 - D_2 = (E_1 \times B) - (E_2 \times B) = (E_1-E_2) \times B = 0$$ in $H^2(Y)$.

Now, instead consider a \emph{non-trivial} bundle. Let $B = \mathbb{P}^1$, $L_1 = \mathcal{O}(k)$ and let $\Sigma$ be the fan of $\mathbb{P}^1$. Then, the toric variety bundle $Y$ is a $\mathbb{P}^1$-bundle on $\mathbb{P}^1$. In particular, $Y$ is the Hirzebruch surface $\mathbb{P}(\mathcal{O}(k) \oplus \mathcal{O})$. We will later see that $D_1 - D_2 = k \cdot [\text{fiber}]$. 

Notice that the horizontal divisors $D_1$ and $D_2$ of $Y$ are equivalent when the bundle is trivial but differ by $k$ fiber classes when the bundle is non-trivial. We will see that the relations between the horizontal divisors of $Y$ depend on the first Chern classes of the line bundles associated to $Y$. 

\subsection{Divisors on a toric variety bundle}
\label{sec:divisors}

We will now discuss the relations between the horizontal divisors on a toric variety bundle. Since the balancing condition involves algebraic cycles and their intersection numbers, we will use $A_k(\cdot)$ to be the image of the Chow group $CH_k(\cdot)$ of $k$-cycles in $H_{2k}(\cdot)$.

\begin{proposition}
\label{prop:excision}
Let $X$ be an open subscheme of $Y$ and let $D = Y \setminus X$. Let $Y$ have dimension $k$. Then the sequence $$A_{k-1}(D)\rightarrow A_{k-1}(Y) \rightarrow A_{k-1}(X) \rightarrow 0$$ is exact. 
\end{proposition}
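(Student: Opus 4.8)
The plan is to take as backbone the classical right-exact excision sequence for Chow groups and transport it to the images in homology. Write $i\colon D \hookrightarrow Y$ for the closed immersion and $j\colon X \hookrightarrow Y$ for the open immersion, so that $X = Y \setminus D$. The classical excision sequence for $(k-1)$-cycles reads
$$CH_{k-1}(D) \xrightarrow{\ i_*\ } CH_{k-1}(Y) \xrightarrow{\ j^*\ } CH_{k-1}(X) \to 0,$$
where $i_*$ is proper pushforward along the closed immersion and $j^*$ is flat (restriction) pullback along the open immersion; this holds with no hypotheses on $Y$. The cycle class map $cl\colon CH_{k-1}(\cdot) \to H_{2(k-1)}(\cdot)$ is a natural transformation compatible with both proper pushforward and flat pullback, so it carries $i_*$ and $j^*$ to the corresponding operations on homology and restricts to maps on the images $A_{k-1}(D) \to A_{k-1}(Y) \to A_{k-1}(X)$. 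These are the maps appearing in the proposition.

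First I would verify the two easy exactness statements by a diagram chase. For surjectivity of $A_{k-1}(Y) \to A_{k-1}(X)$: given $\alpha \in A_{k-1}(X)$, write $\alpha = cl(\xi)$ with $\xi \in CH_{k-1}(X)$, lift $\xi$ through the surjection $j^*$ to some $\tilde\xi \in CH_{k-1}(Y)$, and observe that $cl(\tilde\xi) \in A_{k-1}(Y)$ maps to $j^* cl(\tilde\xi) = cl(j^*\tilde\xi) = cl(\xi) = \alpha$ by naturality. That the composite $A_{k-1}(D) \to A_{k-1}(X)$ vanishes is immediate from the relation $j^* i_* = 0$ on Chow groups together with naturality of $cl$.

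The remaining inclusion — that the kernel of $A_{k-1}(Y) \to A_{k-1}(X)$ lies in the image of $A_{k-1}(D)$ — is the main obstacle, and it is precisely the point where passing to images in homology is not automatic: an element $\gamma = cl(c)$ with $j^*\gamma = 0$ in $H_{2(k-1)}(X)$ only tells us that $j^* c$ lies in the \emph{kernel} of the cycle class map on $X$, not that $j^* c = 0$ in $CH_{k-1}(X)$, so Chow exactness alone does not close the chase. To handle this I would introduce the topological long exact sequence of the closed–open pair in Borel–Moore homology,
$$H_{2(k-1)}(D) \xrightarrow{\ i_*\ } H_{2(k-1)}(Y) \xrightarrow{\ j^*\ } H_{2(k-1)}(X),$$
which is exact at the middle term, and regard $cl$ as a morphism from the Chow excision sequence to this homology sequence. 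A four-lemma style chase then reduces the claim to the statement that an algebraic class on $Y$ that is pushed forward from $H_{2(k-1)}(D)$ already arises from an algebraic class on $D$. In the setting of this paper this last step is clean, since $Y$, $D$ and their strata are bundles over a nice base with toric fibers, for which the cycle class map is injective and rational equivalence agrees with homological equivalence; the image sequence then coincides with the Chow excision sequence and exactness follows at once. I would therefore record the compatibility of $cl$ with both exact sequences as the key input and present the diagram chase as the proof.
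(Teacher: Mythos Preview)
The paper's own proof is a one-line citation of Fulton's Proposition~1.8, the excision sequence for Chow groups. You go further, and rightly so: Fulton's statement is for $CH_{k-1}$, whereas the paper defines $A_{k-1}$ as the \emph{image} of $CH_{k-1}$ under the cycle class map, and right-exactness need not pass to images in general. Your chase for surjectivity and for the vanishing of the composite is correct, and you correctly isolate the middle exactness as the nontrivial step. Your resolution, though, is heavier than needed and leans on special features of toric bundles that the proposition, as stated, does not assume. The clean fix is a dimension count: since $X$ is a nonempty open in $Y$ (irreducible of dimension $k$ in the applications), the closed complement $D$ has dimension at most $k-1$, and in this top degree the cycle class map $CH_{k-1}(D)\to H^{\mathrm{BM}}_{2(k-1)}(D)$ is an isomorphism, both sides being free on the $(k-1)$-dimensional irreducible components of $D$. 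Hence $A_{k-1}(D)=H^{\mathrm{BM}}_{2(k-1)}(D)$, and exactness at $A_{k-1}(Y)$ then follows directly from the Borel--Moore long exact sequence of the pair you wrote down, with no hypothesis on the cycle map for $X$ or $Y$.
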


\begin{proof}
This follows from Fulton \cite[Proposition 1.8]{Fulton}, and is also known as the excision sequence.
\end{proof}

\begin{lemma}
Let $Y$ be a toric variety bundle of dimension $k$ over a base $B$ of dimension $m$. Then, $A_{k-1}(Y)$ is generated by the pullback of $A_{m-1}(B)$ together with the horizontal torus invariant divisors of the bundle.
\end{lemma}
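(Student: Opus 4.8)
The plan is to use the excision sequence of Proposition~\ref{prop:excision} to split the problem into two parts: a boundary contribution, which will account for the horizontal divisors, and the Chow group of the open torus bundle itself, which will account for the pullback from $B$. Write $D = Y \setminus X = \bigcup_\rho D_\rho$ for the toric boundary, so that $\dim Y = k = m + n$ where $n$ is the fiber dimension. Let $\bar\pi : Y \to B$ be the bundle projection and $\pi : X \to B$ its restriction to the open torus bundle $X$. Proposition~\ref{prop:excision} gives the exact sequence
\[
A_{k-1}(D) \to A_{k-1}(Y) \to A_{k-1}(X) \to 0.
\]

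First I would identify the image of $A_{k-1}(D)$. Since $D$ has pure dimension $k-1$, every $(k-1)$-dimensional subvariety of $D$ is an irreducible component of $D$, and these components are exactly the horizontal divisors $D_\rho$. Hence $A_{k-1}(D)$ is generated by the classes $[D_\rho]$, and therefore so is the image of the first map. By exactness it then suffices to prove that $A_{k-1}(X)$ is generated by the image of the flat pullback $\pi^* : A_{m-1}(B) \to A_{k-1}(X)$: granting this, any $\gamma \in A_{k-1}(Y)$ would restrict on $X$ to $\pi^*\beta = (\bar\pi^*\beta)|_X$ for some $\beta \in A_{m-1}(B)$, so that $\gamma - \bar\pi^*\beta$ lies in the kernel of $A_{k-1}(Y) \to A_{k-1}(X)$ and is thus a combination of the $[D_\rho]$, giving the claimed generation. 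This step uses only that flat pullback commutes with restriction to an open subscheme.

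The heart of the argument, and the step I expect to be the main obstacle, is showing that $\pi^* : A_{m-1}(B) \to A_{k-1}(X)$ is surjective. The difficulty is that a torus bundle is not an affine bundle, so homotopy invariance of Chow groups does not apply to $X$ directly. Instead I would exhibit $X$ as an iterated $\mathbb{C}^*$-bundle $B = X_0 \leftarrow X_1 \leftarrow \cdots \leftarrow X_n = X$, where $X_j = L_1^* \times_B \cdots \times_B L_j^*$ and each $p_j : X_j \to X_{j-1}$ is the complement of the zero section in the pullback of $L_j$. For a single $\mathbb{C}^*$-bundle $p : E^* \to W$ sitting inside a line bundle $E \to W$, I would combine homotopy invariance for the vector bundle $E$ (Fulton~\cite[Theorem 3.3]{Fulton}), which makes the flat pullback $A_d(W) \to A_{d+1}(E)$ an isomorphism, with the localization sequence $A_{d+1}(W) \to A_{d+1}(E) \to A_{d+1}(E^*) \to 0$ coming from the zero section; composing the isomorphism with the surjection onto $A_{d+1}(E^*)$ shows that $p^* : A_d(W) \to A_{d+1}(E^*)$ is surjective in every degree.

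Since flat pullback along a $\mathbb{C}^*$-bundle raises dimension by one and hence preserves codimension, iterating this observation along the tower yields a chain of surjections $A_{m-1}(B) \to A_m(X_1) \to \cdots \to A_{k-1}(X)$ whose composite is exactly $\pi^*$. This establishes the surjectivity of $\pi^*$ on the codimension-one groups and, combined with the exact sequence above and the identification of the boundary classes, completes the argument.
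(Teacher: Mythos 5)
Your proof is correct and takes essentially the same approach as the paper: excision for $X \subset Y$ identifies the boundary contribution with the classes $[D_\rho]$, and surjectivity of $\pi^* : A_{m-1}(B) \to A_{k-1}(X)$ is deduced from Fulton's homotopy invariance (Theorem 3.3) combined with the excision sequence. The only difference is organizational: the paper applies these two facts once to the rank-$n$ vector bundle $E = L_1 \times_B \cdots \times_B L_n$ containing $X$ as an open subscheme, whereas you iterate them through the tower of $\mathbb{C}^*$-bundles, peeling off one line bundle at a time.
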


\begin{proof}
Consider the vector bundle $E$ given by the fiber product $L_1 \times_B \ldots \times_B L_n$ of the line bundles associated to the torus bundle $X \subset Y$. This comes with a map $f:E \rightarrow B$. By \cite[Theorem 3.3]{Fulton}, $$f^*: A_{m-1}(B) \rightarrow A_{k-1}(E)$$ is an isomorphism. By the excision sequence for $X \subset E$, we have a surjective map $A_{k-1}(E) \rightarrow A_{k-1}(X)$. Therefore, there is a surjective map $$A_{m-1}(B) \rightarrow A_{k-1}(X)$$ and we see that $A_{k-1}(X)$ is generated by pullbacks of divisors on $B$.

Now consider the torus bundle $X \subset Y$ and the closed set $D = Y \setminus X$. 
By Proposition~\ref{prop:excision}, we have an exact sequence 
$$A_{k-1}(D)\rightarrow A_{k-1}(Y) \rightarrow A_{k-1}(X) \rightarrow 0$$ where the maps are induced by the inclusions $i: D \hookrightarrow Y$ and $j: X \hookrightarrow Y$. By exactness, we have an isomorphism $$\frac{A_{k-1}(Y)}{\text{Im}(i_*)} = \frac{A_{k-1}(Y)}{A_{k-1}(D)} \cong A_{k-1}(X).$$
Since $D = Y \setminus X = \bigcup D_\rho$ and $\dim(D) = k - 1$, we see that $A_{k-1}(D)$ is generated by the horizontal torus invariant divisors of $Y$. Together with the above, we see that $A_{k-1}(Y)$ is generated by the pullback of $A_{m-1}(B)$ and the horizontal divisors $D_\rho$.
\end{proof}

We have seen that for a toric variety $X_\Sigma$, the relations between the divisors are given by $\sum \langle u,v_\rho \rangle \cdot E_\rho = 0$ for $u \in M$. For a toric variety bundle $Y$, the relations between the horizontal divisors are more complicated as described in the following lemma.

\begin{lemma}
\label{lem:relations}
The relations between the horizontal divisors of a toric variety bundle $\pi:Y \rightarrow B$ are given as follows. For each basis element $e_i$ in the cocharacter lattice $M$ of the torus bundle, there is an associated line bundle $L_i$ and therefore an associated Chern class $c(L_i)$ in the Picard group of $B$. The relation is then: $$\sum_\rho \langle e_i \cdot v_\rho \rangle \cdot D_\rho = \pi^* c_1(L_i),$$ where $v_\rho$ is the primitive integral vector along the edge corresponding to $D_\rho$.
\end{lemma}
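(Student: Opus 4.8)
The plan is to globalise the toric identity $\text{div}(\chi^{e_i}) = \sum_\rho \langle e_i, v_\rho\rangle E_\rho$ over the base. The essential point is that the fibrewise character $\chi^{e_i}$ --- i.e.\ the $i$-th fibre coordinate $z_i$ --- does \emph{not} glue to a global rational function on $Y$, and the precise obstruction to gluing is the transition cocycle of $L_i$. First I would fix a trivialising cover $\{U_\alpha\}$ and, on each chart $\pi^{-1}(U_\alpha) \cong U_\alpha \times X_\Sigma$, identify $z_i^{(\alpha)}$ with the pullback of the monomial $\chi^{e_i}$ on $X_\Sigma$. Recalling the transition relation $z_i^{(\alpha)} = h_{i,\alpha\beta}(b)\, z_i^{(\beta)}$ established for the valuation map, where $\{h_{i,\alpha\beta}\}$ is exactly the $\mathcal{O}^*$-valued cocycle defining $L_i$, the collection $\{z_i^{(\alpha)}\}$ satisfies the compatibility $s_\alpha = (h_{i,\alpha\beta}\circ\pi)\,s_\beta$ that characterises a rational section of the pullback line bundle $\pi^* L_i$.

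Next I would compute the Weil divisor of this rational section. On each chart $z_i^{(\alpha)}$ is the pullback of $\chi^{e_i}$, whose divisor on $X_\Sigma$ is $\sum_\rho \langle e_i, v_\rho\rangle E_\rho$, so the local contributions are $\sum_\rho \langle e_i, v_\rho\rangle D_\rho|_{U_\alpha}$. The one verification that needs care is that no vertical (fibre-direction) component is introduced upon gluing: since each $h_{i,\alpha\beta}$ is a nowhere-vanishing regular function pulled back from $B$, it has neither zeros nor poles, so the local divisors agree on overlaps and patch to the purely horizontal cycle $\sum_\rho \langle e_i, v_\rho\rangle D_\rho$ on all of $Y$.

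Finally I would invoke the standard dictionary between rational sections and first Chern classes: on the smooth variety $Y$ (smooth since $B$ is smooth and $\Sigma$ is smooth), for any line bundle $\mathcal{L}$ and rational section $s$ one has $[\text{div}(s)] = c_1(\mathcal{L})$ in $A_{k-1}(Y)$ (see \cite{Fulton}). Applying this to $s = \chi^{e_i}$ and $\mathcal{L} = \pi^* L_i$, together with $c_1(\pi^* L_i) = \pi^* c_1(L_i)$, gives $\sum_\rho \langle e_i, v_\rho\rangle D_\rho = \pi^* c_1(L_i)$. As a consistency check I would specialise to the Hirzebruch surface $Y = \mathbb{P}(\mathcal{O}(k)\oplus\mathcal{O})$ of Section~\ref{sec:guidingobs}: here $e_1 = 1$, the two rays give $\langle e_1, v_1\rangle = 1$ and $\langle e_1, v_2\rangle = -1$, and the formula collapses to $D_1 - D_2 = \pi^* c_1(\mathcal{O}(k)) = k\,[\text{fiber}]$, matching the guiding observation.

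The main obstacle is essentially one of conventions and bookkeeping: pinning down the twisting direction so that $\chi^{e_i}$ is a section of $\pi^* L_i$ rather than its dual (fixing the sign of the Chern class), and confirming that the fibrewise divisor acquires no spurious vertical part. Once $\chi^{e_i}$ is correctly recognised as a rational section of $\pi^* L_i$ whose divisor is the horizontal cycle, the identity is immediate from the divisor--Chern-class correspondence. If one additionally wants these to be the \emph{complete} set of relations among the $D_\rho$ modulo pullbacks, this follows by combining the formula with the preceding lemma's description of the generators of $A_{k-1}(Y)$ and the fact that fibrewise the listed relations already exhaust the toric divisor relations.
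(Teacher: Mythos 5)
Your proof is correct, but it takes a genuinely different route from the paper: the paper disposes of this lemma with a one-line citation to Sankaran--Uma \cite[Theorem 1.2]{SU}, whereas you give a direct, self-contained argument. Your mechanism --- identifying the fibrewise character $\chi^{e_i}$ with the collection $\{z_i^{(\alpha)}\}$, observing that the gluing cocycle $h_{i,\alpha\beta}\circ\pi$ makes this a rational section of $\pi^*L_i$ rather than a global rational function, and then reading off $\sum_\rho \langle e_i, v_\rho\rangle D_\rho = c_1(\pi^*L_i) = \pi^*c_1(L_i)$ from the divisor--Chern-class correspondence --- is sound: the transition relation you quote is exactly the one the paper establishes when defining the valuation map, multiplication by an invertible function pulled back from the base indeed introduces no vertical components, and your Hirzebruch check confirms that your sign convention reproduces the guiding observation $D_1 - D_2 = k\,[\text{fiber}]$. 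What your approach buys is transparency and self-containedness: it exhibits $\pi^*c_1(L_i)$ as precisely the obstruction to $\chi^{e_i}$ gluing to a rational function on $Y$, which is the conceptual content of the lemma. What the citation buys is generality and completeness: Sankaran--Uma give a full presentation of the cohomology of a toric bundle, so under that proof the listed relations are automatically \emph{all} relations among the $D_\rho$ modulo pullbacks from $B$, a point your argument establishes only in the direction ``these relations hold,'' with your closing remark about exhausting the relations being a sketch rather than a proof. Since Theorem~\ref{thm:balancing} only uses the validity of the relations, your argument suffices for the paper's purposes.
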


\begin{proof}
This follows from Sankaran–Uma~\cite[Theorem 1.2]{SU}.
\end{proof}



It follows from the previous two lemmas that $A_{k-1}(Y)$ is generated by the pullback of $A_{m-1}(B)$ and the horizontal divisors $D_\rho$ with the relations $\sum_\rho \langle e_i \cdot v_\rho \rangle \cdot D_\rho = \pi^* c_1(L_i)$ for all $i$. This gives a description of the divisors on a toric variety bundle, similar to our discussion in Section~\ref{sec:balfromgoem} of divisors on a toric variety. We will now see that this leads to the balancing condition in the torus bundle case in a similar way.

\subsection{The balancing condition}
\label{sec:balancing}

Let $Z$ be a curve in a torus bundle $X$ over $B$, and let $Y$ be a toric variety bundle compactifying $X$ such that the closure of $Z$ meets the toric boundary transversely. Consider the tropicalization of $Z$ which is (at most) a 1-dimensional polyhedral complex in $\mathbb{R}^n$. There are naturally defined weights $\omega_\rho$ on each edge. 

We see that the curve $Z$ determines a function 
\begin{align*}
\Sigma^{(1)} & \rightarrow \mathbb{N} \\
\rho & \mapsto \omega_\rho
\end{align*}
which assigns a weight to each ray in the fan $\Sigma$. However, this function is not arbitrary i.e. the weights are subject to constraints given by $n$ equations. This collection of equations is the balancing condition for the tropicalization of $Z$.

The balancing condition states that the sum over $\rho$ of the $i$th coordinate of $v_\rho$ multiplied by the weight is equal to a number given by the first Chern class of $L_i$. In other words, the weighted sum of the edges is equal to a vector given by the first Chern classes of the $n$ line bundles on $B$.

Define $\beta := \pi_*[Z] \in H_2(B)$. For example, consider $B=\mathbb{P}^1$, then $\beta$ is an integer corresponding to the intersection number of $Z$ with a generic fiber of $\pi$. Recall that the weight $\omega_\rho$ along a ray $\rho$ in $\text{Trop}(Z)$ is defined to be the intersection multiplicity of $Z \cap D_\rho$. We may now state the main theorem.

\begin{theorem}
\label{thm:balancing}
Let $Z$ be a curve in a torus bundle $X$ over $B$ and let $\beta := \pi_*[Z] \in H_2(B)$. Then, the tropicalization of $Z$ satisfies the balancing condition: 
$$\sum_\rho \langle e_i \cdot v_\rho \rangle \cdot \omega_\rho = c_1(L_i)\cdot \beta \quad \forall i.$$
\end{theorem}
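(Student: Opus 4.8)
The plan is to run the argument of Section~\ref{sec:balfromgoem} almost verbatim, with one change: the divisor relation we feed in is now the \emph{inhomogeneous} relation of Lemma~\ref{lem:relations}, namely $\sum_\rho \langle e_i, v_\rho\rangle \cdot D_\rho = \pi^* c_1(L_i)$ in $A_{k-1}(Y)$, rather than the homogeneous toric relation $\sum_\rho \langle e_i, v_\rho\rangle \cdot E_\rho = 0$. Since each weight is by definition the intersection number $\omega_\rho = D_\rho \cdot Z$, any linear relation among the classes $[D_\rho]$ turns into a linear relation among the $\omega_\rho$ once we intersect with $[Z]$; the theorem is then exactly what this relation produces, now with a nonzero right-hand side measuring the non-triviality of the bundle.

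First I would replace $Z$ by its closure $\overline{Z}$ in the complete bundle $Y$ and work in $A_*(Y)$. This is justified by Theorem~\ref{thm:ztransversebundle}: transversality of $\overline{Z}$ with the toric boundary is exactly what makes each $\omega_\rho = D_\rho \cdot \overline{Z}$ an honest count of points with multiplicity, i.e. a well-defined intersection number rather than an excess intersection. I would then intersect the relation of Lemma~\ref{lem:relations} with $[\overline{Z}] \in A_1(Y)$. Because $\dim \overline{Z} + \dim D_\rho - \dim Y = 1 + (k-1) - k = 0$, every term pairs to a number, and bilinearity of the intersection pairing turns the left-hand side into $\sum_\rho \langle e_i, v_\rho\rangle \cdot \omega_\rho$.

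It remains to evaluate the right-hand side $\pi^* c_1(L_i) \cdot [\overline{Z}]$. Here I would apply the projection formula for the proper morphism $\pi : Y \to B$ --- proper because $\Sigma$ is complete, hence the fiber $X_\Sigma$ is complete --- to obtain
$$\pi^* c_1(L_i) \cdot [\overline{Z}] = c_1(L_i) \cdot \pi_*[\overline{Z}] = c_1(L_i) \cdot \beta,$$
using $\beta = \pi_*[Z]$. Letting $e_i$ run over the standard basis of $M$ then assembles the $n$ scalar identities into the single vector balancing statement.

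I expect no serious obstacle, only one point requiring care: the identification of the geometrically defined weight $\omega_\rho$ (the point-multiplicity of $\overline{Z} \cap D_\rho$) with the Chow-theoretic number $D_\rho \cdot [\overline{Z}]$ that appears after intersecting the divisor relation. This identification is precisely what transversality guarantees, and it is what allows the whole theorem to fall out as essentially a one-line consequence of Lemma~\ref{lem:relations} together with the projection formula. A secondary check is that the pairing $c_1(L_i) \cdot \beta$ on $B$ is the correct reading of the pushed-forward class --- e.g. for $B = \mathbb{P}^1$ it recovers $\deg(L_i)$ times the degree of $\overline{Z} \to B$, matching the Hirzebruch computation in Section~\ref{sec:guidingobs}.
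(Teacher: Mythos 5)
Your proposal is correct and follows essentially the same route as the paper's proof: intersect the divisor relation of Lemma~\ref{lem:relations} with the curve class, identify $D_\rho \cdot [Z]$ with the weights $\omega_\rho$, and apply the projection formula to convert $\pi^* c_1(L_i) \cdot [Z]$ into $c_1(L_i) \cdot \beta$. Your additional care about working with the closure $\overline{Z}$, invoking transversality from Theorem~\ref{thm:ztransversebundle} to justify the weight identification, and noting the properness of $\pi$ makes explicit points the paper leaves implicit, but the argument is the same.
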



\begin{proof}
From Lemma~\ref{lem:relations}, we have the relation
$$\sum_\rho \langle e_i \cdot v_\rho \rangle \cdot D_\rho = \pi^* c_1(L_i).$$
This gives
$$\sum_\rho \langle e_i \cdot v_\rho \rangle \cdot D_\rho \cdot [Z] = \pi^* c_1(L_i)  \cdot [Z].$$
On the left hand side we use the fact that $\omega_\rho = D_\rho \cdot [Z]$ and on the right hand side we apply the projection formula. This gives
\begin{align*}
\sum_\rho \langle e_i \cdot v_\rho \rangle \cdot \omega_\rho  &= c_1(L_i)\cdot \pi_*[Z]. \qedhere
\end{align*}
\end{proof}

\end{document}